\documentclass[12pt]{article}
\usepackage[a4paper, total={6in, 8in}, margin={1in}]{geometry}
\usepackage[utf8]{inputenc}

\usepackage{amsmath}
\usepackage{amsthm}
\usepackage{amsfonts}
\usepackage[hidelinks]{hyperref}
\usepackage{scrextend}

\theoremstyle{plain}
\newtheorem{theorem}{Theorem}
\newtheorem{corollary}[theorem]{Corollary}
\newtheorem{lemma}[theorem]{Lemma}

\theoremstyle{definition}

\theoremstyle{remark}

\title{Some Expressions involving Bernoulli, Harmonic, and Stirling Numbers}
\author{Hongshen Chua \\
	}
\date{}

\begin{document}
\maketitle


\bigskip

\section{Introduction}

The Bernoulli numbers, $ B_n $, arose in the study of sums of powers of positive integers. They are commonly defined via the generating function
\begin{align*}
	\frac{z}{e^z - 1} = \sum_{n = 0}^\infty B_n \frac{z^n}{n!}.
\end{align*}
The first few Bernoulli numbers are $ B_0 = 1 $, $ B_1 = -1/2 $, $ B_2 = 1/6 $ and $ B_4 = -1/30 $. It is well-known that $ B_{2n + 1} = 0 $ for all $ n \geq 1 $. 

In recent years, considerable effort has been devoted to establishing connections between Bernoulli numbers and harmonic numbers $ H_n $, defined by
\begin{align*}
	H_n := \sum_{k = 1}^n \frac{1}{k},
\end{align*}
with the convention that $ H_0 = 0 $. To describe some of these identities, we introduce the Stirling numbers of the first and second kinds, defined, respectively, by
\begin{align*}
	x^{(n)} = x(x + 1) \cdots (x + n - 1) = \sum_{k = 0}^n \begin{bmatrix} n \\ k \end{bmatrix} x^k,
\end{align*}
and 
\begin{align*}
	x^n = \sum_{k = 0}^n (-1)^{n - k} \begin{Bmatrix} n \\ k \end{Bmatrix} x^{(k)}.
\end{align*}
Various properties of these number can be found in the book by Graham, Knuth, and Patashnik \cite{Graham_Knuth_Patashnik}. Several of these properties will be used throughout this paper.  

A simple identity connecting Bernoulli numbers, harmonic numbers, and Stirling numbers was obtained by Kellner \cite{Kellner} using differentiation
\begin{align*}
	\sum_{k = 1}^n (-1)^{k - 1} \begin{Bmatrix} n \\ k \end{Bmatrix} \frac{k!}{k + 1} H_k = \frac{n}{2} B_{n - 1}.
\end{align*}

More recently, Akkanat and Kargin \cite{Akkanat_Kargin} established two related identities
\begin{align*}
	&\sum_{k = 1}^n (-1)^{k - 1} \begin{Bmatrix} n \\ k \end{Bmatrix} (k - 1)! H_k = B_{n - 1}, \\
	&\sum_{k = 2}^n (-1)^{k - 1} \begin{Bmatrix} n \\ k \end{Bmatrix} (k - 1)! H_k H_{k - 1} = \frac{n + 1}{2} B_{n - 2}. 
\end{align*}
Observe that $ (k - 1)! = \begin{bmatrix} k \\ 1 \end{bmatrix} $ and $ (k - 1)! H_{k - 1} = \begin{bmatrix} k \\ 2 \end{bmatrix} $. These formulas suggest a more general formula involving $ \begin{bmatrix} k \\ j \end{bmatrix} $ for each $ j $. Indeed, Kargin and Mutluer \cite{Kargin_Mutluer} proved that
\begin{align*}
	\sum_{k = j}^n (-1)^{k - j} \begin{Bmatrix} n \\ k \end{Bmatrix} \begin{bmatrix} k \\ j \end{bmatrix} H_k = \bigg[ \binom{n}{j} - 1 \bigg] \frac{B_{n - j}}{n - j}.
\end{align*}
In this paper, we build upon the method of Kargin and Mutluer to derive several further identities involving these numbers.

\section{Main Result}

We now state the main result, which will be used throughout this paper.
\begin{theorem} \label{Th: Main Theorem} Let $ f(x) $ be a real-valued function. Define 
\begin{align*}
	\mathcal{S}_{n, j} (f) := \sum_{k = j}^n (-1)^{k - j} \begin{Bmatrix} n \\ k \end{Bmatrix} \begin{bmatrix} k \\ j \end{bmatrix} f(k).
\end{align*}
Then, for $ 1 \leq j \leq n $, we have
\begin{align*}
	\mathcal{S}_{n + 1, j} (f) - \mathcal{S}_{n, j - 1} (f) = \mathcal{S}_{n + 1, j} (\Delta f) - \mathcal{S}_{n, j} (x \Delta f),
\end{align*}
where $ \Delta f(k) = f(k) - f(k - 1) $ denotes the backward difference of $ f $.
\end{theorem}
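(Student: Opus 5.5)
The plan is to prove the identity by direct manipulation, using the two standard triangular recurrences (see \cite{Graham_Knuth_Patashnik}):
\begin{align*}
	\begin{Bmatrix} n+1 \\ k \end{Bmatrix} = k\begin{Bmatrix} n \\ k \end{Bmatrix} + \begin{Bmatrix} n \\ k-1 \end{Bmatrix}, \qquad
	\begin{bmatrix} k+1 \\ j \end{bmatrix} = k\begin{bmatrix} k \\ j \end{bmatrix} + \begin{bmatrix} k \\ j-1 \end{bmatrix}.
\end{align*}
We take the conventions $\begin{Bmatrix} n \\ k \end{Bmatrix}=0$ for $k>n$ and $\begin{bmatrix} k \\ j \end{bmatrix}=0$ for $k<j$. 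With these, every sum $\mathcal{S}_{n,j}$ may be regarded as running over all integers $k$, and index shifts need no separate treatment of endpoints.

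First I would move $\mathcal{S}_{n+1,j}(\Delta f)$ to the left. Since $\mathcal{S}$ is linear in $f$ and $f(k)-\Delta f(k)=f(k-1)$, the claim becomes
\begin{align*}
	\sum_{k} (-1)^{k-j}\begin{Bmatrix} n+1 \\ k \end{Bmatrix}\begin{bmatrix} k \\ j \end{bmatrix} f(k-1) = \mathcal{S}_{n,j-1}(f) - \mathcal{S}_{n,j}(x\Delta f).
\end{align*}
Next I would expand the Stirling number of the second kind on the left by the first recurrence. The part $k\begin{Bmatrix} n \\ k \end{Bmatrix}$ contributes exactly $\mathcal{S}_{n,j}\big(x f(x-1)\big)$. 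For the part $\begin{Bmatrix} n \\ k-1 \end{Bmatrix}$, I would substitute $k\mapsto k+1$, which gives $\sum_k (-1)^{k+1-j}\begin{Bmatrix} n \\ k \end{Bmatrix}\begin{bmatrix} k+1 \\ j \end{bmatrix} f(k)$.

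Then I would apply the second recurrence to $\begin{bmatrix} k+1 \\ j \end{bmatrix}$. This splits the shifted sum into two pieces:
\begin{align*}
	-\mathcal{S}_{n,j}(xf) + \mathcal{S}_{n,j-1}(f).
\end{align*}
The sign is $(-1)^{k-(j-1)}$ in the second piece, which matches the definition of $\mathcal{S}_{n,j-1}$. Adding the first part, the left side equals
\begin{align*}
	\mathcal{S}_{n,j-1}(f) - \mathcal{S}_{n,j}\big(x f(x) - x f(x-1)\big) = \mathcal{S}_{n,j-1}(f) - \mathcal{S}_{n,j}(x\Delta f),
\end{align*}
which is the required identity.

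The only delicate point is the bookkeeping of summation ranges after the shift. The shifted sum runs from $k=j-1$, which matches the lower limit of $\mathcal{S}_{n,j-1}$. In the $\mathcal{S}_{n,j}(xf)$ piece the extra term $k=j-1$ vanishes because $\begin{bmatrix} j-1 \\ j \end{bmatrix}=0$. At the top, $\begin{Bmatrix} n \\ n+1 \end{Bmatrix}=0$ kills the stray term. The hypothesis $j\ge 1$ ensures that $\mathcal{S}_{n,j-1}$ is meaningful and that only values $f(k)$ with $k\ge j-1\ge 0$ appear.
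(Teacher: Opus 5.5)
Your proof is correct and is essentially the paper's argument. It uses the same two triangular recurrences and the same shift $k\mapsto k+1$, reordered: by subtracting $\mathcal{S}_{n+1,j}(\Delta f)$ first (via $f(k)-\Delta f(k)=f(k-1)$), you need the second-kind recurrence only once, where the paper applies it forward and then backward, and your vanishing conventions keep the $k=j-1$ boundary term cleaner than the paper's verbal description of that step.
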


\begin{proof}
We begin by considering the first term on the left-hand side
\begin{align*}
	\mathcal{S}_{n + 1, j} (f) = \sum_{k = j}^{n + 1} (-1)^{k - j} \begin{Bmatrix} n + 1 \\ k \end{Bmatrix} \begin{bmatrix} k \\ j \end{bmatrix} f(k).
\end{align*}
Applying the recurrence relation for Stirling numbers of the second kind \cite[pg. 164]{Graham_Knuth_Patashnik}
\begin{align*}
	\begin{Bmatrix} n + 1 \\ k \end{Bmatrix} = k \begin{Bmatrix} n \\ k \end{Bmatrix} + \begin{Bmatrix} n \\ k - 1 \end{Bmatrix},
\end{align*}
we see that 
\begin{align*}
	\mathcal{S}_{n + 1, j} (f) = \sum_{k = j}^{n + 1} (-1)^{k - j} \begin{Bmatrix} n \\ k \end{Bmatrix} \begin{bmatrix} k \\ j \end{bmatrix} kf(k) + \sum_{k = j}^{n + 1} (-1)^{k - j} \begin{Bmatrix} n \\ k - 1 \end{Bmatrix} \begin{bmatrix} k \\ j \end{bmatrix} f(k).
\end{align*}
We also use the recurrence relation for Stirling numbers of the first kind \cite[pg. 164]{Graham_Knuth_Patashnik}
\begin{align*}
	\begin{bmatrix} k + 1 \\ j \end{bmatrix} = k \begin{bmatrix} k \\ j \end{bmatrix} + \begin{bmatrix} k \\ j - 1 \end{bmatrix},
\end{align*}
to obtain 
\begin{multline} \label{Eq: 1}
	\mathcal{S}_{n + 1, j} (f) = \sum_{k = j}^n (-1)^{k - j} \begin{Bmatrix} n \\ k \end{Bmatrix} \begin{bmatrix} k + 1 \\ j \end{bmatrix} f(k)  - \sum_{k = j}^n (-1)^{k - j} \begin{Bmatrix} n \\ k \end{Bmatrix} \begin{bmatrix} k \\ j - 1 \end{bmatrix} f(k) - \\
	\sum_{k = j - 1}^n (-1)^{k - j} \begin{Bmatrix} n \\ k \end{Bmatrix} \begin{bmatrix} k + 1 \\ j \end{bmatrix} f(k + 1).
\end{multline}
Separating the term corresponding to $ k = j - 1 $ in the last sum gives
\begin{align*}
	\sum_{k = j - 1}^n (-1)^{k - j} \begin{Bmatrix} n \\ k \end{Bmatrix} \begin{bmatrix} k + 1 \\ j \end{bmatrix} f(k + 1) = - \begin{Bmatrix} n \\ j - 1 \end{Bmatrix} f(j) + \sum_{k = j}^n (-1)^{k - j} \begin{Bmatrix} n \\ k \end{Bmatrix} \begin{bmatrix} k + 1 \\ j \end{bmatrix} f(k + 1).
\end{align*}
The first term on the right hand side can be combined with the second sum in Equation \eqref{Eq: 1} to give $ \mathcal{S}_{n, j - 1} (f) $. Hence, Equation \eqref{Eq: 1} simplifies to 
\begin{align*}
	\mathcal{S}_{n + 1, j} (f) = \sum_{k = j - 1}^n (-1)^{k - j} \begin{Bmatrix} n \\ k \end{Bmatrix} \begin{bmatrix} k + 1 \\ j \end{bmatrix} (f(k) - f(k + 1)) + \mathcal{S}_{n, j - 1} (f),
\end{align*}
or equivalently
\begin{align*}
	\mathcal{S}_{n + 1, j} (f) = \sum_{k = j}^{n + 1} (-1)^{k - j} \begin{Bmatrix} n \\ k - 1 \end{Bmatrix} \begin{bmatrix} k \\ j \end{bmatrix} \Delta f (k) + \mathcal{S}_{n, j - 1} (f).
\end{align*}

Applying the recurrence relation for Stirling numbers of the second kind once more yields
\begin{align*}
	\mathcal{S}_{n + 1, j} (f) - \mathcal{S}_{n, j - 1} (f) = \sum_{k = j}^{n + 1} (-1)^{k - j} \begin{Bmatrix} n + 1 \\ k \end{Bmatrix} \begin{bmatrix} k \\ j \end{bmatrix} \Delta f(k) - \sum_{k = j}^n (-1)^{k - j} \begin{Bmatrix} n \\ k \end{Bmatrix} \begin{bmatrix} k \\ j \end{bmatrix} k \Delta f(k),
\end{align*}
which is precisely the required expression.
\end{proof}
	
\section{Applications of Main Result}

\subsection{Example 1}

When $ f(k) = H_k $, we have $ \Delta f(k) = H_k - H_{k - 1} = 1/k $. The right hand side of Theorem \ref{Th: Main Theorem} becomes
\begin{align*}
	\sum_{k = j}^{n + 1} (-1)^{k - j} \begin{Bmatrix} n + 1 \\ k \end{Bmatrix} \begin{bmatrix} k \\ j \end{bmatrix} \frac{1}{k} - \sum_{k = j}^n (-1)^{k - j} \begin{Bmatrix} n \\ k \end{Bmatrix} \begin{bmatrix} k \\ j \end{bmatrix}.
\end{align*}
Using the identities \cite[Eq. (6.100)]{Graham_Knuth_Patashnik}
\begin{align*}
	\sum_{k = j}^n (-1)^{k - j} \begin{Bmatrix} n \\ k \end{Bmatrix} \begin{bmatrix} k \\ j \end{bmatrix} \frac{1}{k} = \binom{n - 1}{j} \frac{B_{n - j}}{n- j},
\end{align*}
and \cite[pg. 164]{Graham_Knuth_Patashnik}	
\begin{align*}
	\sum_{k = j}^n (-1)^{k - j} \begin{Bmatrix} n \\ k \end{Bmatrix} \begin{bmatrix} k \\ j \end{bmatrix} = \delta_{n, j},
\end{align*}
where $ \delta_{m, n} $ denotes the Kronecker delta, we get for $ n > j $, 
 \begin{align*}
 	\mathcal{S}_{n + 1, j} (H_k) - \mathcal{S}_{n, j - 1} (H_k) = \binom{n}{j} \frac{B_{n + 1 - j}}{n + 1 - j}.
 \end{align*}

We can iterate this identity by successively lowering the value of $ j $. Since $ \mathcal{S}_{n - j, 0} (H_k) = 0 $, we obtain
\begin{align*}
	\mathcal{S}_{n + 1, j} (H_k) = \frac{B_{n + 1 - j}}{n + 1 - j} \sum_{i = 0}^{j - 1} \binom{n - i}{j - i} = \bigg[ \binom{n + 1}{j} - 1 \bigg] \frac{B_{n + 1 - j}}{n + 1 - j}.
\end{align*}
Replacing $ n + 1 $ by $ n $, we arrive at
\begin{align} \label{Eq: 2}
	\sum_{k = j}^n (-1)^{k - j} \begin{Bmatrix} n \\ k \end{Bmatrix} \begin{bmatrix} k \\ j \end{bmatrix} H_k = \bigg[ \binom{n}{j} - 1 \bigg] \frac{B_{n - j}}{n - j},
\end{align}
which recovers the main theorem of Kargin and Mutluer \cite{Kargin_Mutluer}.

\subsection{Example 2}

When $ f(k) = kH_k $, we have $ \Delta f(k) = kH_k - kH_{k - 1} + H_{k - 1} = H_{k - 1} + 1 = H_k - 1/k + 1 $. Therefore, Theorem \ref{Th: Main Theorem} gives
\begin{multline*}
	\mathcal{S}_{n + 1, j} (kH_k) - \mathcal{S}_{n, j - 1} (kH_k) = \sum_{k = j}^{n + 1} (-1)^{k - j} \begin{Bmatrix} n + 1 \\ k \end{Bmatrix} \begin{bmatrix} k \\ j \end{bmatrix} \bigg( H_k - \frac{1}{k} + 1 \bigg) \\
	- \sum_{k = j}^n (-1)^{k - j} \begin{Bmatrix} n \\ k \end{Bmatrix} \begin{bmatrix} k \\ j \end{bmatrix} (kH_k + k - 1).
\end{multline*}
As $ \mathcal{S}_{n, j} (1) = \delta_{n, j} $, the constant terms in both sums vanishes for $ n > j $. Thus, the remaining terms become
\begin{multline} \label{Eq: 3}
	\mathcal{S}_{n + 1, j} (kH_k) + \mathcal{S}_{n, j} (kH_k) - \mathcal{S}_{n, j - 1} (kH_k) = \sum_{k = j}^{n + 1} (-1)^{k - j} \begin{Bmatrix} n + 1 \\ k \end{Bmatrix} \begin{bmatrix} k \\ j \end{bmatrix} \bigg( H_k - \frac{1}{k} \bigg) \\
	- \sum_{k = j}^n (-1)^{k - j} \begin{Bmatrix} n \\ k \end{Bmatrix} \begin{bmatrix} k \\ j \end{bmatrix} k.
\end{multline}

The first sum can be evaluated using the identities established above. We therefore need only to look at the second sum. The following lemma will be useful.
\begin{lemma} \label{Th: Lemma 2} For $ 1 \leq j \leq n $, we have
\begin{align*}
	\sum_{k = j}^n (-1)^{k - j} \begin{Bmatrix} n \\ k \end{Bmatrix} \begin{bmatrix} k \\ j \end{bmatrix} k = (-1)^{n - j} \binom{n}{j - 1}.
\end{align*}
\end{lemma}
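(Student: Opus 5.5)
Since $\mathcal{S}_{n,j}(k)$ in Lemma \ref{Th: Lemma 2} is the sum $\mathcal{S}_{n,j}(f)$ with $f(k)=k$, I would absorb the factor $k$ into the Stirling number of the first kind and then use the orthogonality relation $\mathcal{S}_{n,j}(1)=\delta_{n,j}$ already quoted in Example 1. The recurrence $\begin{bmatrix} k+1 \\ j \end{bmatrix} = k\begin{bmatrix} k \\ j \end{bmatrix} + \begin{bmatrix} k \\ j-1 \end{bmatrix}$ gives
\begin{align*}
	\sum_{k=j}^n (-1)^{k-j} \begin{Bmatrix} n \\ k \end{Bmatrix} \begin{bmatrix} k \\ j \end{bmatrix} k = \sum_{k=j}^n (-1)^{k-j} \begin{Bmatrix} n \\ k \end{Bmatrix} \begin{bmatrix} k+1 \\ j \end{bmatrix} + \sum_{k=j-1}^n (-1)^{k-j+1} \begin{Bmatrix} n \\ k \end{Bmatrix} \begin{bmatrix} k \\ j-1 \end{bmatrix} + \begin{Bmatrix} n \\ j-1 \end{Bmatrix}.
\end{align*}
The last term compensates for adjoining the $k=j-1$ term to the second sum. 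The second sum is then exactly $\mathcal{S}_{n,j-1}(1)=\delta_{n,j-1}$, which is $0$ since $j\le n$.

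The main step is to evaluate
\[
T_{n,j}:=\sum_{k} (-1)^{k-j} \begin{Bmatrix} n \\ k \end{Bmatrix} \begin{bmatrix} k+1 \\ j \end{bmatrix}.
\]
I would use the standard expansion (Graham--Knuth--Patashnik, Eq.~(6.16))
\[
\begin{bmatrix} k+1 \\ j \end{bmatrix} = \sum_{m} \begin{bmatrix} k \\ m \end{bmatrix} \binom{m}{j-1},
\]
and then swap the order of summation. Orthogonality applied to each inner sum gives
\[
\sum_k (-1)^{k-m} \begin{Bmatrix} n \\ k \end{Bmatrix} \begin{bmatrix} k \\ m \end{bmatrix} = \delta_{n,m}.
\]
With the sign bookkeeping $(-1)^{k-j}=(-1)^{k-m}(-1)^{m-j}$, this collapses $T_{n,j}$ to $(-1)^{n-j}\binom{n}{j-1}$.

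Combining the pieces, the left side of the lemma equals
\[
(-1)^{n-j}\binom{n}{j-1}+\begin{Bmatrix} n \\ j-1 \end{Bmatrix}.
\]
This does not match the statement unless $\begin{Bmatrix} n \\ j-1 \end{Bmatrix}=0$, which happens only for $j=1$. So I would first check small cases. For $n=2$, $j=2$ the left side is $2$, while the claim gives $\binom{2}{1}=2$. But my formula gives $2+\begin{Bmatrix} 2 \\ 1 \end{Bmatrix}=3$, so I have made a slip, most likely in the boundary term.

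Redoing the shift with care: the recurrence gives $k\begin{bmatrix} k \\ j \end{bmatrix} = \begin{bmatrix} k+1 \\ j \end{bmatrix} - \begin{bmatrix} k \\ j-1 \end{bmatrix}$. Hence the second piece is $-\sum_{k=j}^n (-1)^{k-j}\begin{Bmatrix} n \\ k \end{Bmatrix}\begin{bmatrix} k \\ j-1 \end{bmatrix}$. Adjoining the $k=j-1$ term, whose sign is $(-1)^{-1}$, produces the correction $-\begin{Bmatrix} n \\ j-1 \end{Bmatrix}$, not $+\begin{Bmatrix} n \\ j-1 \end{Bmatrix}$.

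At the same time, the sum $T_{n,j}$ should be taken over $k\ge j-1$, as in Equation \eqref{Eq: 1}. The $k=j-1$ term there is $-\begin{Bmatrix} n \\ j-1 \end{Bmatrix}\begin{bmatrix} j \\ j \end{bmatrix}$, and removing it from the full sum adds back $+\begin{Bmatrix} n \\ j-1 \end{Bmatrix}$. The two boundary corrections therefore cancel, and the result is exactly $(-1)^{n-j}\binom{n}{j-1}$.

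The main obstacle is thus purely this boundary and sign bookkeeping at $k=j-1$, together with correctly recalling identity (6.16). I would verify the final formula on $n=2$, $j=1,2$: the left side gives $-1$ and $2$, matching $(-1)^{1}\binom{2}{0}=-1$ and $\binom{2}{1}=2$.
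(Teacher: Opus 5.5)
Your final argument, after the redo, is correct, and it takes a genuinely different route from the paper.

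\textbf{The paper's route.} It absorbs the factor $k$ analytically. It applies $z\frac{d}{dz}$ to the generating function $(\log(1+z))^j/j!$, performs the Stirling transform $z\mapsto e^z-1$, and reads off the coefficients of $(1-e^{-z})z^{j-1}/(j-1)!$.

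\textbf{Your route.} You absorb $k$ combinatorially, via $k\begin{bmatrix} k \\ j \end{bmatrix}=\begin{bmatrix} k+1 \\ j \end{bmatrix}-\begin{bmatrix} k \\ j-1 \end{bmatrix}$. Orthogonality kills the second piece. You evaluate the first by expanding $\begin{bmatrix} k+1 \\ j \end{bmatrix}=\sum_m\begin{bmatrix} k \\ m \end{bmatrix}\binom{m}{j-1}$ and applying orthogonality once more.

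\textbf{Comparison.} Your argument is finite and needs no formal power series. It shows transparently where the binomial coefficient comes from: it is the $\binom{m}{j-1}$ of GKP (6.16) collapsed to $m=n$. It also matches the recurrence-shifting style of Theorem~\ref{Th: Main Theorem}. The paper's argument is more mechanical and has no boundary terms to track. It also extends at once to other polynomial weights such as $k^2$ by differentiating again.

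\textbf{Write-up.} Drop the first pass. Its first display has the wrong sign on $\begin{Bmatrix} n \\ j-1 \end{Bmatrix}$, and it evaluates a sum over $k\ge j$ with a formula valid only over the full range. Better yet, extend the sum to all $k\ge 0$ at the outset. This changes nothing, since $\begin{bmatrix} k \\ j \end{bmatrix}=0$ for $k<j$, and the first-kind recurrence holds for every $k\ge 0$. The left side then splits into two pieces:
\begin{itemize}
\item $\sum_{k\ge 0}(-1)^{k-j}\begin{Bmatrix} n \\ k \end{Bmatrix}\begin{bmatrix} k+1 \\ j \end{bmatrix}=(-1)^{n-j}\binom{n}{j-1}$, by your (6.16)-plus-orthogonality step;
\item $-\sum_{k\ge 0}(-1)^{k-j}\begin{Bmatrix} n \\ k \end{Bmatrix}\begin{bmatrix} k \\ j-1 \end{bmatrix}=\delta_{n,j-1}=0$.
\end{itemize}
With this presentation, no boundary correction ever appears.
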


\begin{proof}
The exponential generating function for Stirling numbers of the first kind \cite[pg. 351]{Graham_Knuth_Patashnik} is  
\begin{align*}
	\sum_{k = j}^\infty (-1)^{k - j} \begin{bmatrix} k \\ j \end{bmatrix} \frac{z^k}{k!} = \frac{(\log (1 + z))^j}{j!}.
\end{align*}
Differentiating both sides with respect to $ z $ and multiplying by $ z $, we get
\begin{align*}
	\sum_{k = j}^\infty (-1)^{k - j} \begin{bmatrix} k \\ j \end{bmatrix} \frac{k z^k}{k!} = \frac{z}{1 + z} \frac{(\log (1 + z))^{j - 1}}{(j - 1)!}. 
\end{align*}
Applying the Stirling transformation amounts to replacing $ z $ with $ e^z - 1 $ on the right hand side. Hence, we obtain 
\begin{align*}
	\sum_{n = 1}^\infty \bigg( \sum_{k = j}^n (-1)^{k - j} \begin{Bmatrix} n \\ k \end{Bmatrix} \begin{bmatrix} k \\ j \end{bmatrix} k \bigg) \frac{z^n}{n!} = \frac{e^z - 1}{e^z} \frac{(\log (e^z))^{j - 1}}{(j - 1)!} = (1 - e^{-z}) \frac{z^{j - 1}}{(j - 1)!}.
\end{align*}
On the other hand, the Taylor expansion of $ e^{-z} $ gives
\begin{align*}
	(1 - e^{-z}) \frac{z^{j - 1}}{(j - 1)!} = \sum_{k = 1}^\infty (-1)^{k - 1} \frac{z^{k + j - 1}}{k! (j - 1)!} = \sum_{n = 1}^\infty (-1)^{n - j} \binom{n}{j - 1} \frac{z^n}{n!},
\end{align*}
where, in the second equality, we set $ n = k + j - 1 $. Comparing coefficients of $ z^n/n! $ yields
\begin{align*}
	\sum_{k = j}^n (-1)^{k - j} \begin{Bmatrix} n \\ k \end{Bmatrix} \begin{bmatrix} k \\ j \end{bmatrix} k = (-1)^{n - j} \binom{n}{j - 1},
\end{align*}
as required.
\end{proof}
	
Returning to the Equation \eqref{Eq: 3}, we have
\begin{align} \label{Eq: 4}
	\mathcal{S}_{n + 1, j} (kH_k) + \mathcal{S}_{n, j} (kH_k) - \mathcal{S}_{n, j - 1} (kH_k) = \bigg[ \binom{n}{j - 1} - 1 \bigg] \frac{B_{n + 1 - j}}{n + 1 - j} - (-1)^{n - j} \binom{n}{j - 1}.
\end{align}

Equation \eqref{Eq: 4} provides a recurrence relation for evaluating $ \mathcal{S}_{n, j} (kH_k) $. We first consider the case $ j = 1 $. Since $ \mathcal{S}_{n, 0} (kH_k) = 0 $ for all $ n \geq 1 $, Equation \eqref{Eq: 4} reduces to 
\begin{align*}
	\mathcal{S}_{n + 1, 1} (kH_k) + \mathcal{S}_{n, 1}  (kH_k) = (-1)^n.
\end{align*}
This leads to the following result.
\begin{theorem} \label{Th: Theorem 3} For $ n \geq 1 $, it holds that
\begin{align*}
	\mathcal{S}_{n, 1} (kH_k) = \sum_{k = 1}^n (-1)^{k - 1} \begin{Bmatrix} n \\ k \end{Bmatrix} k! H_k = (-1)^{n - 1} n.
\end{align*}
\end{theorem}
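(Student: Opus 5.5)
Since $\begin{bmatrix} k \\ 1 \end{bmatrix} = (k-1)!$, we have $\begin{bmatrix} k \\ 1 \end{bmatrix} kH_k = k!\,H_k$, so the middle expression in the statement is exactly $\mathcal{S}_{n,1}(kH_k)$. It therefore suffices to prove $\mathcal{S}_{n,1}(kH_k) = (-1)^{n-1} n$. The plan is induction on $n$, driven by the recurrence obtained just above from Equation \eqref{Eq: 4} with $j = 1$:
\begin{align*}
	\mathcal{S}_{n + 1, 1} (kH_k) + \mathcal{S}_{n, 1} (kH_k) = (-1)^n .
\end{align*}

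The first step is to check that this recurrence is valid on the whole range I need. Equation \eqref{Eq: 4} was derived under the assumption $n > j$, because the constant terms $\mathcal{S}_{n,j}(1) = \delta_{n,j}$ were discarded. For $j = 1$ this means $n \geq 2$. The boundary term $\binom{n}{0} - 1 = 0$ kills the Bernoulli contribution, and Lemma \ref{Th: Lemma 2} with $j = 1$ supplies the term $-(-1)^{n-1}\binom{n}{0} = (-1)^n$.

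This step is where I expect the only real care to be needed, so I would re-derive the case $n = 1$ separately. There are two natural options:
\begin{itemize}
	\item[(a)] verify the recurrence at $n = 1$ directly, keeping the $\delta$ terms that were dropped;
	\item[(b)] simply compute the small cases $\mathcal{S}_{1,1}$ and $\mathcal{S}_{2,1}$ by hand and start the induction at $n = 2$.
\end{itemize}
Option (b) is the safest. Directly, $\mathcal{S}_{1,1}(kH_k) = 1\cdot 1\cdot H_1 = 1$. For the next case,
\begin{align*}
	\mathcal{S}_{2,1}(kH_k) = \begin{Bmatrix} 2 \\ 1 \end{Bmatrix} 1!\,H_1 - \begin{Bmatrix} 2 \\ 2 \end{Bmatrix} 2!\,H_2 = 1 - 3 = -2 .
\end{align*}
Both values agree with the formula $(-1)^{n-1} n$.

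For the inductive step, suppose $\mathcal{S}_{n,1}(kH_k) = (-1)^{n-1} n$ for some $n \geq 2$. The recurrence then gives
\begin{align*}
	\mathcal{S}_{n+1,1}(kH_k) = (-1)^n - (-1)^{n-1} n = (-1)^n (n+1),
\end{align*}
which is the claimed formula at $n+1$. This completes the induction.

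As a cross-check, the recurrence can be iterated directly: $(-1)^n \mathcal{S}_{n,1}$ changes by $-1$ at each step, which forces linear growth in $n$ with the observed initial value. This confirms the closed form without any guessing.
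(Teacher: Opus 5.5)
Your proof is correct and is essentially the paper's argument: induction on $n$ using the recurrence $\mathcal{S}_{n+1,1}(kH_k)+\mathcal{S}_{n,1}(kH_k)=(-1)^n$, which comes from Equation (4) at $j=1$. The one difference is that you check $n=2$ by hand and apply the recurrence only for $n\ge 2$, where the derivation of Equation (4) (which assumed $n>j$) is valid; the paper instead starts at $n=1$ and uses the recurrence at $n=1$ without justification, though it does hold there since $-2+1=-1$.
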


\begin{proof}
We proceed by induction on $ n $. For $ n = 1 $, we have $ \mathcal{S}_{1, 1} (kH_k) = 1 $, so the result holds in the base case. Suppose that $ \mathcal{S}_{m, 1} (kH_k) = (-1)^{m - 1} m $ for some $ m \geq 1 $. Taking $ n = m + 1 $, we obtain
\begin{align*}
	\mathcal{S}_{m + 1, 1} (kH_k) = (-1)^m - \mathcal{S}_{m, 1} (kH_k) =  (-1)^n - (-1)^{m - 1} m = (-1)^m (m + 1).
\end{align*}
This completes the induction step and the proof.
\end{proof} 
This identity also follows directly by applying the Stirling transformation to the well-known identity
\begin{align*}
n! H_n = \sum_{k = 1}^n k\begin{bmatrix} n \\ k \end{bmatrix}.
\end{align*}

Next, we consider the case $ j = 2 $. Using the fact that $ \mathcal{S}_{n, 1} (kH_k) = (-1)^{n - 1} n $, Equation \eqref{Eq: 4} becomes
\begin{align*}
	\mathcal{S}_{n + 1, 2} (kH_k) + \mathcal{S}_{n, 2} (kH_k) = (-1)^{n - 1} (2n) + B_{n - 1}.
\end{align*}
This recurrence yields the following theorem.
\begin{theorem} \label{Th: Theorem 4} For $ n \geq 2 $, it holds that 
\begin{align*}
	\mathcal{S}_{n, 2} (kH_k) = \sum_{k = 2}^n (-1)^k \begin{Bmatrix} n \\ k \end{Bmatrix} k! H_k H_{k - 1} = (-1)^n n(n - 1) + \sum_{k = 0}^{n - 2} (-1)^{n - k} B_k.
\end{align*}
\end{theorem}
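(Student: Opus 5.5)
The plan mirrors the proof of Theorem \ref{Th: Theorem 3}: an induction on $n$ driven by the recurrence just obtained from Equation \eqref{Eq: 4} with $j = 2$,
\begin{align*}
	\mathcal{S}_{n + 1, 2} (kH_k) + \mathcal{S}_{n, 2} (kH_k) = (-1)^{n - 1} (2n) + B_{n - 1}, \qquad n > 2 .
\end{align*}
The middle expression in the statement only rewrites $\mathcal{S}_{n,2}(kH_k)$. Since $\begin{bmatrix} k \\ 2 \end{bmatrix} = (k-1)! H_{k-1}$, we have $\begin{bmatrix} k \\ 2 \end{bmatrix} k H_k = k!\, H_k H_{k-1}$ and $(-1)^{k-2} = (-1)^k$.

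First, check the right-hand side. Write $T_n := (-1)^n n(n-1) + \sum_{k=0}^{n-2} (-1)^{n-k} B_k$. We need $T_{n+1} + T_n = (-1)^{n-1}(2n) + B_{n-1}$.
\begin{itemize}
	\item The polynomial parts give $(-1)^{n+1}(n+1)n + (-1)^n n(n-1) = (-1)^{n+1} n\big[(n+1) - (n-1)\big] = (-1)^{n-1}(2n)$.
	\item In the Bernoulli sums, $\sum_{k=0}^{n-1} (-1)^{n+1-k} B_k + \sum_{k=0}^{n-2} (-1)^{n-k} B_k$ telescopes. Every term with $k \le n-2$ cancels against its partner of opposite sign, leaving only the $k = n-1$ term, which is $B_{n-1}$.
\end{itemize}
So $T_n$ satisfies the same first-order recurrence as $\mathcal{S}_{n,2}(kH_k)$.

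Second, fix the initial values. The recurrence came from Equation \eqref{Eq: 4}, which needed $n > j$ to discard the constant terms via $\mathcal{S}_{n,j}(1) = \delta_{n,j}$. So it is only guaranteed for $n \ge 3$, and I would verify both $n = 2$ and $n = 3$ directly.
\begin{itemize}
	\item For $n = 2$: $\mathcal{S}_{2,2}(kH_k) = 2!\,H_2 H_1 = 3$, and $T_2 = 2 + B_0 = 3$.
	\item For $n = 3$, use $\begin{Bmatrix} 3 \\ 2 \end{Bmatrix} = 3$, $\begin{Bmatrix} 3 \\ 3 \end{Bmatrix} = 1$, $H_1 = 1$, $H_2 = 3/2$, $H_3 = 11/6$. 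This gives $\mathcal{S}_{3,2}(kH_k) = 3\cdot 2\cdot\tfrac32\cdot 1 - 6\cdot\tfrac{11}{6}\cdot\tfrac32 = 9 - \tfrac{33}{2} = -\tfrac{15}{2}$. On the other side, $T_3 = -6 - B_0 + B_1 = -6 - 1 - \tfrac12 = -\tfrac{15}{2}$.
\end{itemize}
These agree.

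Finally, the induction. Assume $\mathcal{S}_{m,2}(kH_k) = T_m$ for some $m \ge 3$. The recurrence gives
\begin{align*}
	\mathcal{S}_{m+1,2}(kH_k) = (-1)^{m-1}(2m) + B_{m-1} - T_m = T_{m+1},
\end{align*}
which completes the induction.

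The main obstacle is the boundary case, not the algebra. I would check carefully that the $j = 2$ instance of Equation \eqref{Eq: 4} really holds for all $n \ge 3$. In particular, the term $\mathcal{S}_{n,1}(kH_k)$ from Theorem \ref{Th: Theorem 3} combines with $-\mathcal{S}_{n,2}(kH_k)$-type terms and $\bigl[\binom{n}{1}-1\bigr]\frac{B_{n-1}}{n-1} = B_{n-1}$ to give exactly the stated right-hand side, with the sign of the $\mathcal{S}_{n,1}$ contribution producing $(-1)^{n-1}(2n)$. If the $n = 2$ step failed because of the Kronecker delta, the separate check of $n = 3$ already covers it.
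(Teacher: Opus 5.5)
Your proof is correct and follows the paper's route: an induction on $n$ driven by the $j=2$ instance of Equation \eqref{Eq: 4}, with the closed form checked against that recurrence. Your separate direct check of $n=3$ is a real improvement. Equation \eqref{Eq: 4} was only derived for $n>j$, yet the paper's induction uses it at $n=2$ to pass from $n=2$ to $n=3$. It happens to still hold there, since $-\tfrac{15}{2}+3=-4+B_1$.
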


\begin{proof}
The first equality follows from the fact that $ \begin{bmatrix} k \\ 2 \end{bmatrix} = (k - 1)! H_{k - 1} $. We prove the second equality by induction. For $ n = 2 $, we have $ \mathcal{S}_{2, 2} (kH_k) = 2H_2 = 3 = 2 + B_0 $, so the base case is done. Suppose that the equality holds for some $ m \geq 2 $. Taking $ n = m + 1 $, we get
\begin{align*}
	\mathcal{S}_{m + 1, 2} (kH_k) &= (-1)^{m - 1} m(m - 1) - \sum_{k = 0}^{m - 2} (-1)^{m - k} B_k + (-1)^{m - 1} (2m) + B_{m - 1} \\
	&= (-1)^m m(m + 1) + \sum_{k = 0}^{m - 1} (-1)^{m + 1 - k} B_k.
\end{align*}
Thus, the result holds for $ n = m + 1 $, completing the induction.
\end{proof} 

Notice that $ B_{2n + 1} = 0 $ for all $ n \geq 1 $, so we can further simplify the sum on the right hand side of Theorem \ref{Th: Theorem 4}. Suppose that $ n \geq 3 $, then
\begin{align*}
	\sum_{k = 0}^{n - 2} (-1)^k B_k = 1 + \sum_{k = 0}^{n - 2} B_k,
\end{align*}
where the extra $ 1 $ comes from replacing $ -B_1 = 1/2 $ in the sum with $ B_1 = -1/2 $. Therefore, by adding an extra $ k = 1 $ term in the sum, which does not change its value, we can rewrite Theorem \ref{Th: Theorem 4} as
\begin{align*}
	\sum_{k = 1}^n (-1)^{n - k} \begin{Bmatrix} n \\ k \end{Bmatrix} k! H_k H_{k - 1} = n(n - 1) + 1 + \sum_{k = 0}^{n - 2} B_k.
\end{align*}
Since $ H_{k - 1} = H_k - 1/k $, the left hand side becomes	 
\begin{align*}
	\sum_{k = 1}^n (-1)^{n - k} \begin{Bmatrix} n \\ k \end{Bmatrix} k! H_k^2 - \sum_{k = 1}^n (-1)^{n - k} \begin{Bmatrix} n \\ k \end{Bmatrix} (k - 1)! H_k = n(n - 1) + 1 + \sum_{k = 0}^{n - 2} B_k.
\end{align*}
Using Equation \eqref{Eq: 2} with $ j = 1 $, the second sum on the left hand side equals to $ B_{n - 1} $. Therefore, we arrive at the next corollary.
\begin{corollary} For $ n \geq 3 $, it holds that 
\begin{align*}
	\sum_{k = 1}^n (-1)^{n - k} \begin{Bmatrix} n \\ k \end{Bmatrix} k! H_k^2 = n(n - 1) - 1 + \sum_{k = 0}^{n - 1} B_k.
\end{align*}
\end{corollary}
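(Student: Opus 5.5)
The plan is to follow the chain of reductions set up just before the statement, starting from Theorem~\ref{Th: Theorem 4}. Everything rests on $\begin{bmatrix} k \\ 2 \end{bmatrix} = (k-1)!H_{k-1}$, which gives $\begin{bmatrix} k \\ 2 \end{bmatrix} kH_k = k!\,H_kH_{k-1}$. Theorem~\ref{Th: Theorem 4} therefore evaluates $\sum_{k}(-1)^{k}\begin{Bmatrix} n \\ k \end{Bmatrix} k! H_kH_{k-1}$.

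First, multiply through by $(-1)^n$ to pass to the sign $(-1)^{n-k}$. The $k=1$ term can be added for free because $H_0=0$. For $n\ge 3$, simplify $\sum_{k=0}^{n-2}(-1)^kB_k$ using $B_{2m+1}=0$ for $m\ge1$. Only $k=1$ is affected, and it changes by $-B_1-B_1=1$, so this sum equals $1+\sum_{k=0}^{n-2}B_k$.

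Second, split $H_kH_{k-1}=H_k^2-H_k/k$. The resulting subtracted sum is
\[
\sum_{k=1}^n (-1)^{n-k}\begin{Bmatrix} n \\ k \end{Bmatrix}(k-1)!H_k = (-1)^{n-1}\,\mathcal{S}_{n,1}(H_k),
\]
and Equation~\eqref{Eq: 2} with $j=1$ evaluates it as $(-1)^{n-1}B_{n-1}$. For $n\ge3$, parity settles this. If $n$ is even, $B_{n-1}=0$. If $n$ is odd, the sign is $+$. Either way the term is $B_{n-1}$. Moving it to the right-hand side absorbs it into the Bernoulli sum, giving $\sum_{k=0}^{n-1}B_k$.

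The main obstacle is the additive constant. The two moves above each contribute to it: the $+1$ from the $B_1$ sign flip, and the sign of the $B_{n-1}$ term. Carrying both through exactly as the paper does gives $n(n-1)+1+\sum_{k=0}^{n-1}B_k$, not the printed $n(n-1)-1+\cdots$.

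To decide which constant is right, I would check $n=3$ directly, where $\begin{Bmatrix} 3 \\ k \end{Bmatrix}=1,3,1$:
\begin{align*}
1-\tfrac{27}{2}+\tfrac{121}{6}=\tfrac{23}{3}.
\end{align*}
The printed right-hand side is $6-1+\tfrac23=\tfrac{17}{3}$, while $n(n-1)+1+\sum_{k=0}^{2}B_k=\tfrac{23}{3}$. So I cannot prove the statement as printed. The reduction above proves it with the constant $-1$ replaced by $+1$, and the $n=3$ case shows the printed constant fails. The only further checking I would do is to recheck the base case $\mathcal{S}_{2,2}(kH_k)=3$ of Theorem~\ref{Th: Theorem 4} and the sign of $B_1$, to confirm no other convention could produce $-1$.
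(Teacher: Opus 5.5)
Your reduction is the paper's own argument step for step: Theorem~\ref{Th: Theorem 4}, the $B_1$ sign flip, the split $H_kH_{k-1}=H_k^2-H_k/k$, and Equation~\eqref{Eq: 2} with $j=1$. Your parity remark also makes explicit that $(-1)^{n-1}B_{n-1}=B_{n-1}$ for $n\ge 3$, a sign the paper passes over.

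Your diagnosis is also correct, so the statement as printed is false. The display just before the corollary already carries $+1$, and moving $B_{n-1}$ to the right cannot change that constant. The printed $-1$ is therefore a slip, and the true identity is $\sum_{k=1}^n (-1)^{n-k}\begin{Bmatrix} n \\ k \end{Bmatrix} k!\,H_k^2 = n(n-1)+1+\sum_{k=0}^{n-1}B_k$.

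Your value $23/3$ at $n=3$ is right. At $n=4$ the sum is $41/3$, matching the corrected right-hand side, while the printed one gives $35/3$. No convention rescues the $-1$: switching to $B_1=+\tfrac12$ makes the constant $0$.
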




\end{document}